\begin{document} 
\bibliographystyle{plain}

\newfont{\teneufm}{eufm10}
\newfont{\seveneufm}{eufm7}
\newfont{\fiveeufm}{eufm5}
%
%
\newfam\eufmfam
              \textfont\eufmfam=\teneufm \scriptfont\eufmfam=\seveneufm
              \scriptscriptfont\eufmfam=\fiveeufm
%
%
\def\frak#1{{\fam\eufmfam\relax#1}}
%


\def\bbbr{{\rm I\!R}} 
\def\bbbm{{\rm I\!M}}
\def\bbbn{{\rm I\!N}} 
\def\bbbf{{\rm I\!F}}
\def\bbbh{{\rm I\!H}}
\def\bbbk{{\rm I\!K}}
\def\bbbp{{\rm I\!P}}
\def\bbbone{{\mathchoice {\rm 1\mskip-4mu l} {\rm 1\mskip-4mu l}
{\rm 1\mskip-4.5mu l} {\rm 1\mskip-5mu l}}}
\def\bbbc{{\mathchoice {\setbox0=\hbox{$\displaystyle\rm C$}\hbox{\hbox
to0pt{\kern0.4\wd0\vrule height0.9\ht0\hss}\box0}}
{\setbox0=\hbox{$\textstyle\rm C$}\hbox{\hbox
to0pt{\kern0.4\wd0\vrule height0.9\ht0\hss}\box0}}
{\setbox0=\hbox{$\scriptstyle\rm C$}\hbox{\hbox
to0pt{\kern0.4\wd0\vrule height0.9\ht0\hss}\box0}}
{\setbox0=\hbox{$\scriptscriptstyle\rm C$}\hbox{\hbox
to0pt{\kern0.4\wd0\vrule height0.9\ht0\hss}\box0}}}}
\def\bbbq{{\mathchoice {\setbox0=\hbox{$\displaystyle\rm
Q$}\hbox{\raise
0.15\ht0\hbox to0pt{\kern0.4\wd0\vrule height0.8\ht0\hss}\box0}}
{\setbox0=\hbox{$\textstyle\rm Q$}\hbox{\raise
0.15\ht0\hbox to0pt{\kern0.4\wd0\vrule height0.8\ht0\hss}\box0}}
{\setbox0=\hbox{$\scriptstyle\rm Q$}\hbox{\raise
0.15\ht0\hbox to0pt{\kern0.4\wd0\vrule height0.7\ht0\hss}\box0}}
{\setbox0=\hbox{$\scriptscriptstyle\rm Q$}\hbox{\raise
0.15\ht0\hbox to0pt{\kern0.4\wd0\vrule height0.7\ht0\hss}\box0}}}}
\def\bbbt{{\mathchoice {\setbox0=\hbox{$\displaystyle\rm
T$}\hbox{\hbox to0pt{\kern0.3\wd0\vrule height0.9\ht0\hss}\box0}}
{\setbox0=\hbox{$\textstyle\rm T$}\hbox{\hbox
to0pt{\kern0.3\wd0\vrule height0.9\ht0\hss}\box0}}
{\setbox0=\hbox{$\scriptstyle\rm T$}\hbox{\hbox
to0pt{\kern0.3\wd0\vrule height0.9\ht0\hss}\box0}}
{\setbox0=\hbox{$\scriptscriptstyle\rm T$}\hbox{\hbox
to0pt{\kern0.3\wd0\vrule height0.9\ht0\hss}\box0}}}}
\def\bbbs{{\mathchoice
{\setbox0=\hbox{$\displaystyle     \rm S$}\hbox{\raise0.5\ht0\hbox
to0pt{\kern0.35\wd0\vrule height0.45\ht0\hss}\hbox
to0pt{\kern0.55\wd0\vrule height0.5\ht0\hss}\box0}}
{\setbox0=\hbox{$\textstyle        \rm S$}\hbox{\raise0.5\ht0\hbox
to0pt{\kern0.35\wd0\vrule height0.45\ht0\hss}\hbox
to0pt{\kern0.55\wd0\vrule height0.5\ht0\hss}\box0}}
{\setbox0=\hbox{$\scriptstyle      \rm S$}\hbox{\raise0.5\ht0\hbox
to0pt{\kern0.35\wd0\vrule height0.45\ht0\hss}\raise0.05\ht0\hbox
to0pt{\kern0.5\wd0\vrule height0.45\ht0\hss}\box0}}
{\setbox0=\hbox{$\scriptscriptstyle\rm S$}\hbox{\raise0.5\ht0\hbox
to0pt{\kern0.4\wd0\vrule height0.45\ht0\hss}\raise0.05\ht0\hbox
to0pt{\kern0.55\wd0\vrule height0.45\ht0\hss}\box0}}}}
\def\bbbz{{\mathchoice {\hbox{$\sf\textstyle Z\kern-0.4em Z$}}
{\hbox{$\sf\textstyle Z\kern-0.4em Z$}}
{\hbox{$\sf\scriptstyle Z\kern-0.3em Z$}}
{\hbox{$\sf\scriptscriptstyle Z\kern-0.2em Z$}}}}
\def\ts{\thinspace}

\newtheorem{theorem}{Theorem}
\newtheorem{lemma}[theorem]{Lemma}
\newtheorem{claim}[theorem]{Claim}
\newtheorem{cor}[theorem]{Corollary}
\newtheorem{prop}[theorem]{Proposition}
\newtheorem{definition}[theorem]{Definition}
\newtheorem{remark}[theorem]{Remark}
\newtheorem{question}[theorem]{Open Question}

\def\qed{\ifmmode
\squareforqed\else{\unskip\nobreak\hfil
\penalty50\hskip1em\null\nobreak\hfil\squareforqed
\parfillskip=0pt\finalhyphendemerits=0\endgraf}\fi}

\def\squareforqed{\hbox{\rlap{$\sqcap$}$\sqcup$}}

\def\cA{{\mathcal A}}
\def\cB{{\mathcal B}}
\def\cC{{\mathcal C}}
\def\cD{{\mathcal D}}
\def\cE{{\mathcal E}}
\def\cF{{\mathcal F}}
\def\cG{{\mathcal G}}
\def\cH{{\mathcal H}}
\def\cI{{\mathcal I}}
\def\cJ{{\mathcal J}}
\def\cK{{\mathcal K}}
\def\cL{{\mathcal L}}
\def\cM{{\mathcal M}}
\def\cN{{\mathcal N}}
\def\cO{{\mathcal O}}
\def\cP{{\mathcal P}}
\def\cQ{{\mathcal Q}}
\def\cR{{\mathcal R}}
\def\cS{{\mathcal S}}
\def\cT{{\mathcal T}}
\def\cU{{\mathcal U}}
\def\cV{{\mathcal V}}
\def\cW{{\mathcal W}}
\def\cX{{\mathcal X}}
\def\cY{{\mathcal Y}}
\def\cZ{{\mathcal Z}}
\newcommand{\rmod}[1]{\: \mbox{mod}\: #1}

\def\tcN{\cN^\mathbf{c}}

\def\Tr{{\mathrm{Tr}}}

\def\mand{\qquad \mbox{and} \qquad}
\renewcommand{\vec}[1]{\mathbf{#1}}

\def\eqref#1{(\ref{#1})}


\newcommand{\ignore}[1]{}

\hyphenation{re-pub-lished}

\parskip 1.5 mm
\def\lln{{\mathrm Lnln}}
\def\Res{\mathrm{Res}\,}

\def\F{{\bbbf}}
\def\Fp{\F_p}
\def\fp{\Fp^*}
\def\Fq{\F_q}
\def\ff{\F_2}
\def\ffn{\F_{2^n}}

\def\K{{\bbbk}}
\def \Z{{\bbbz}}
\def \N{{\bbbn}}
\def\Q{{\bbbq}}
\def \R{{\bbbr}}

\def\Zm{\Z_m}
\def \Um{{\mathcal U}_m}

\def \Bf{\frak B}

\def\Km{\cK_\mu}

\def\va {{\mathbf a}}
\def \vb {{\mathbf b}}
\def \vc {{\mathbf c}}
\def\vx{{\mathbf x}}
\def \vr {{\mathbf r}}
\def \vv {{\mathbf v}}
\def\vu{{\mathbf u}}
\def \vw{{\mathbf w}}
\def \vz {{\mathbfz}}

\def\\{\cr}
\def\({\left(}
\def\){\right)}
\def\fl#1{\left\lfloor#1\right\rfloor}
\def\rf#1{\left\lceil#1\right\rceil}

\def\flq#1{{\left\lfloor#1\right\rfloor}_q}
\def\flp#1{{\left\lfloor#1\right\rfloor}_p}
\def\flm#1{{\left\lfloor#1\right\rfloor}_m}

\def\Al{{\sl Alice}}
\def\Bob{{\sl Bob}}

\def\Or{{\mathcal O}}

\def\inv#1{\mbox{\rm{inv}}\,#1}
\def\invM#1{\mbox{\rm{inv}}_M\,#1}
\def\invp#1{\mbox{\rm{inv}}_p\,#1}

\def\Ln#1{\mbox{\rm{Ln}}\,#1}

\def \nd {\,|\hspace{-1.2mm}/\,}

\def\ord{\mu}

\def\E{\mathbf{E}}

\def\Cl{{\mathrm {Cl}}}

\def\epp{\mbox{\bf{e}}_{p-1}}
\def\ep{\mbox{\bf{e}}_p}
\def\eq{\mbox{\bf{e}}_q}

\def\bm{\bf{m}}

\newcommand{\floor}[1]{\lfloor {#1} \rfloor}

\newcommand{\comm}[1]{\marginpar{%
\vskip-\baselineskip 
\raggedright\footnotesize
\itshape\hrule\smallskip#1\par\smallskip\hrule}}

\def\rem{{\mathrm{\,rem\,}}}
\def\dist {{\mathrm{\,dist\,}}}
\def\etal{{\it et al.}}
\def\ie{{\it i.e. }}
\def\veps{{\varepsilon}}
\def\eps{{\eta}}

\def\ind#1{{\mathrm {ind}}\,#1}
               \def \MSB{{\mathrm{MSB}}}
\newcommand{\abs}[1]{\left| #1 \right|}

\title{On the  Sum-Product Problem 
on Elliptic Curves}

\author {{\sc Omran Ahmadi}  \\
{Department of Combinatorics \& Optimization}\\
{University of Waterloo} \\
{Waterloo, Ontario, N2L 3G1, Canada} \\
{\tt oahmadid@math.uwaterloo.ca} \and
{\sc Igor  Shparlinski}\\
             Department of Computing\\ Macquarie University\\
             Sydney, NSW 2109, Australia\\
             {\tt igor@comp.mq.edu.au} }
             
\date {}

\maketitle

\begin{abstract}
Let $\E$ be an ordinary elliptic curve over a finite field $\F_{q}$ of
$q$ elements and $x(Q)$ denote the $x$-coordinate of a 
 point $Q = (x(Q),y(Q))$ on $\E$. Given an $\F_q$-rational point $P$ of order $T$, 
we show that for any subsets $\cA, \cB$ of the unit group  
 of the residue ring modulo $T$, at least one of the sets
$$
\{x(aP) + x(bP)\ : \ a \in \cA,\  b \in \cB\} \quad\text{and}\quad
\{x(abP)\ : \ a \in \cA,\  b \in \cB\}
$$
is large. This question is motivated by a series of recent results on the 
sum-product problem over finite fields and other algebraic structures.  
\end{abstract}

\paragraph*{2000 Mathematics Subject Classification:} \quad Primary 11G05, 11L07, 11T23 
\paragraph{Keywords:} Sum-product problem, elliptic curves, character sums

\section{Introduction}

We fix   an ordinary elliptic curve $\E$ over  a finite field $\F_{q}$ of
$q$ elements. 

We assume that $\E$ is 
given by an affine Weierstra\ss\ equation
$$\E:\ y^2+(a_1x+a_3)y=x^3+a_2x^2+a_4x+a_6,$$
with some $a_1, \ldots, a_6 \in \F_{q}$, see~\cite{Silv}.

We recall that the set of all points on $\E$ forms an Abelian group, 
with the point at infinity $\cO$ as the neutral
element. 
As usual, we write every point $Q \ne \cO$ on $\E$ as $Q = (x(Q), y(Q))$. 

Let $\E(\F_{q})$   denote the set of $\F_q$-rational
points on  $\E$ and let $P \in \E(\F_{q})$ be a fixed point 
of order $T$. 

Let $\Z_T$  denote the residue ring modulo $T$ and 
let $\Z_T^*$ be its unit group.

We show that for any sets $\cA, \cB \subseteq \Z_T^*$, at least one of the sets
\begin{equation}
\label{eq:S and T}
\begin{split}
\cS  & = \{x(aP) + x(bP)\ : \ a \in \cA,\  b \in \cB\},\\
\cT & = \{x(abP)\ : \ a \in \cA,\  b \in \cB\}, 
\end{split} 
\end{equation}
is large.

This question is motivated by a series of recent results on the 
sum-product problem over $\F_q$ which assert that for 
any sets $\cA, \cB \subseteq \F_q$, at least one of the sets
$$
\cG = \{a+b\ : \ a \in \cA, \ b \in \cB\} \mand
\cH = \{ab\ : \ a \in \cA, \ b \in \cB\}
$$
is large, see~\cite{BGK,BKT,Gar1,Gar2,HaIoKoRu,HaIoSo,KatzShen1,
KatzShen2} for the background and further references.

We remark that yet another variant  of the 
sum-product problem for elliptic curves has recently 
been considered in~\cite{Shp2} where it is shown that 
for sets $\cR, \cS \subseteq \E(\F_q)$ at least one of the sets
$$
\{x(R) + x(S)\ : \ R \in \cR,\  S \in \cS\} \quad\text{and}\quad
\{x(R\oplus S)\ : \ R \in \cR, \ S \in \cS\}
$$
is large,  where  $\oplus $ denotes the group operation
on the points of $\E$. 

As in~\cite{Shp2}, our approach is based on
the argument of M.~Garaev~\cite{Gar2} which we combine 
with a bound of certain bilinear character sums
over points of $\E(\F_{q})$ which have been considered
in~\cite{BFGS} (instead of the
estimate of~\cite{Shp1} used in~\cite{Shp2}). 

In fact here we present a slight improvement of 
the result of~\cite{BFGS} that is based on 
using the argument of~\cite{GarKar}.

Throughout the paper, the implied constants in the symbols `$O$' and
`$\ll$' may  depend on an integer parameter
$\nu \ge 1$. We recall that $X\ll Y$
and $X = O(Y)$ are both equivalent to the inequality $|X|\le c Y$ with some
constant $c> 0$.

\bigskip

\noindent{\bf Acknowledgements.}
This paper was initiated during a very enjoyable visit of I.~S. at
the Department of Combinatorics \& Optimization of the University
of Waterloo whose hospitality, support and stimulating
research atmosphere are gratefully appreciated. Research of I.~S.
was supported by ARC grant DP0556431.

\section{Bilinear Sums over Elliptic Curves}

Let 
\begin{equation}
\label{eq:Sum T}
T_{\rho, \vartheta} (\psi,\cK, \cM) = \sum_{k \in \cK}\left|\sum_{m \in \cM} 
\rho(k)\vartheta(m) \psi(x(kmP))\right|,
\end{equation} 
where   $\cK,\cM \subseteq \Z_T^*$, 
$\rho(k)$ and $\vartheta(m)$ are arbitrary  
complex functions supported on $\cK$ and  $\cM$ with
$$
|\rho(k) | \le 1, \ k \in \cK, \mand  |\vartheta(m)| \le 1, \ m \in \cM, 
$$
and $\psi$ is a nontrivial additive character
of $\F_q$. 

These sums have been introduced and estimated in~\cite{BFGS}. 
Here we obtain a stronger result by using the approach to 
sums of this type given in~\cite{GarKar}.

\begin{theorem}
\label{thm:BilinSum}
Let $\E$ be an ordinary elliptic
curve defined over $\Fq$, and let $P \in \E(\Fq)$ be a point of
order $T$. Then, for any fixed integer $\nu\ge 1$, 
for all subsets  $\cK,\cM \subseteq \Z_T^*$ and  complex functions $\rho(k)$ and $\vartheta(m)$ supported on $\cK$ and  $\cM$ with
$$
|\rho(k) | \le 1, \ k \in \cK, \mand  |\vartheta(m)| \le 1, \ m \in \cM, 
$$
uniformly over all 
nontrivial additive characters $\psi$
of $\F_q$
$$
T_{\rho, \vartheta} (\psi,\cK, \cM)  \ll (\#\cK)^{1-\frac{1}{2\nu}} (\#\cM)^{\frac{\nu+1}{\nu+2}}T^{\frac{\nu+1}{\nu(\nu+2)}}q^{\frac{1}{4(\nu+2)}}(\log q)^{\frac{1}{\nu+2}}.
$$
\end{theorem}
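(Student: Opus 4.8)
The plan is to bound $T_{\rho,\vartheta}(\psi,\cK,\cM)$ by the standard amplification scheme: strip the weight $\rho$, raise the outer sum to a high power to expose complete character sums over $\langle P\rangle$, and estimate those sums by a Weil-type bound sharpened through the completion technique of~\cite{GarKar}. First I would use $|\rho(k)|\le 1$ to pull $\rho$ inside the modulus, so that
\[
T_{\rho,\vartheta}(\psi,\cK,\cM)\le \sum_{k\in\cK}\Bigl|\sum_{m\in\cM}\vartheta(m)\,\psi(x(kmP))\Bigr|=\sum_{k\in\cK}|S_k|,
\]
where $S_k=\sum_{m\in\cM}\vartheta(m)\psi(x(kmP))$. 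Applying H\"older's inequality with exponent $2\nu$ and then enlarging the range of summation from $\cK$ to all of $\Z_T^*$ (the summand being nonnegative) gives
\[
T_{\rho,\vartheta}(\psi,\cK,\cM)\le (\#\cK)^{1-\frac{1}{2\nu}}\Bigl(\sum_{k\in\Z_T^*}|S_k|^{2\nu}\Bigr)^{\frac{1}{2\nu}},
\]
which already accounts for the factor $(\#\cK)^{1-1/(2\nu)}$; everything else must come from the $2\nu$-th moment $W=\sum_{k\in\Z_T^*}|S_k|^{2\nu}$.

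Next I would expand $|S_k|^{2\nu}=S_k^\nu\,\overline{S_k}^{\,\nu}$, interchange the order of summation, and discard the weights via $|\vartheta(m)|\le1$, reducing $W$ to
\[
W\le \sum_{\mathbf m,\mathbf n\in\cM^\nu}\Bigl|\sum_{k\in\Z_T^*}\psi\Bigl(\sum_{i=1}^\nu x(km_iP)-\sum_{j=1}^\nu x(kn_jP)\Bigr)\Bigr|.
\]
The tuples split into a diagonal part, where $\{m_iP\}=\{n_jP\}$ as multisets (recall $x(Q)=x(-Q)$, so each coincidence is allowed up to a sign), and an off-diagonal part. On the diagonal the inner sum equals $\#\Z_T^*\le T$, and there are $\ll_\nu (\#\cM)^\nu$ such tuples, so the diagonal contributes $\ll_\nu (\#\cM)^\nu T$.

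The off-diagonal tuples are the heart of the matter, and here I would invoke the bilinear character-sum estimate for points of $\E$ from~\cite{BFGS}, in the form sharpened by the argument of~\cite{GarKar}. The naive bound ``number of tuples times the pointwise Weil bound $q^{1/2}$'' loses too much, since it produces $(\#\cM)^{2\nu}q^{1/2}$, which exceeds the target. The Garaev--Karatsuba refinement avoids this: one completes the $\cM$-summation by additive characters modulo $T$ (the source of the factor $\log q$), and then applies a Cauchy--Schwarz step that replaces the crude count of off-diagonal configurations by a much smaller mean value, in which the individual complete sums over $\langle P\rangle$ are controlled by the Weil bound for the relevant rational function on $\E$. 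It is this pairing of completion with second-moment averaging that both improves the saving to $q^{1/4}$ and introduces the denominators $\nu+2$, yielding a bound of the shape $W\ll_\nu (\#\cM)^\nu T+\bigl((\#\cM)^{\nu+1}T^{(\nu+1)/\nu}q^{1/4}\log q\bigr)^{2\nu/(\nu+2)}$.

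Finally I would substitute this estimate into the H\"older bound, take the $2\nu$-th root, check that the diagonal term is dominated by the second term (so the maximum collapses to a single product), and simplify using $T\le \#\E(\Fq)\ll q$ to reach the stated inequality. The step I expect to be the main obstacle is precisely the off-diagonal estimate: establishing the Weil-type bound for the multi-shift sum $\sum_{k}\psi(\sum_i x(km_iP)-\sum_j x(kn_jP))$ with full uniformity in the tuples, in particular isolating the degenerate configurations in which the associated rational function on $\E$ becomes constant (these are exactly the diagonal terms), and then carrying the completion bookkeeping carefully enough that the exponents come out as claimed.
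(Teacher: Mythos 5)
There is a genuine gap, and it sits exactly where you predicted: the off-diagonal estimate. Your plan applies H\"older directly to the $k$-sum and expands the $2\nu$-th moment with the \emph{elements of $\cM$ themselves} as the multipliers, so the complete sums you must control are $\sum_{k}\psi\bigl(\sum_i x(km_iP)-\sum_j x(kn_jP)\bigr)$ with $m_i,n_j\in\cM\subseteq\Z_T^*$. The only Weil-type input available here (the Lange--Shparlinski bound, quoted in the paper) is $\ll \max_j v_j^2\, q^{1/2}$, i.e.\ it degrades \emph{quadratically in the size of the multipliers}, because the rational function $x(vQ)$ has degree growing like $v^2$. Since elements of $\cM$ can be as large as $T$, this gives $T^2q^{1/2}$ per off-diagonal tuple, which is worse than trivial; and unlike the classical setting there is no way to reduce the multipliers modulo something small. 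Your proposed fix --- ``complete the $\cM$-summation by additive characters modulo $T$, then Cauchy--Schwarz'' --- does not address this: completion does not shrink the multipliers, the $\log q$ in the theorem does not come from completion, and the displayed bound for $W$ with the $2\nu/(\nu+2)$ exponent is asserted rather than derived.

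The missing idea is the Garaev--Karatsuba dilation device, which is the whole point of the paper's proof. One writes $km=(kt)(v)$ by averaging over an auxiliary set $\cV$ of the first $L$ primes not dividing $\#\E(\F_q)$ (hence coprime to $T$), so that after exchanging summations and applying H\"older the inner variable of the $2\nu$-th moment ranges over $\cV$, not over $\cM$. The multipliers appearing in the complete sums over $\langle P\rangle$ are then small primes of size $O(L\log q)$, so the Lange--Shparlinski loss is only $(L\log q)^2$; the prefactor $1/\#\cV$ from the averaging keeps the diagonal under control; and the exponents $\nu+2$ in the final bound arise precisely from optimizing the free parameter $L$ between the diagonal term (which wants $L$ large) and the off-diagonal term (which wants $L$ small). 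Without introducing $\cV$ and this optimization, neither the $q^{1/(4(\nu+2))}$ saving nor the $(\log q)^{1/(\nu+2)}$ factor can be produced, and the moment $W$ as you set it up cannot be bounded by the stated quantity.
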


\begin{proof} We follow the scheme of the proof of~\cite[Lemma~4]{GarKar}
in the special case of $d=1$
(and also $\Z_T$ plays the role of $\Z_{p-1}$). 
Furthermore, 
in our proof $\cK$, $\cM$, $\Z_T^*$ play the roles 
of $\cX$, $\cL_d$ and $\cU_d$ in 
the proof of~\cite[Lemma~4]{GarKar}, respectively. 
In particular, for some integer parameter $L$ with 
\begin{equation}
\label{eq:L small}
1 \le L \le T (\log q)^{-2}
\end{equation}
we define $\cV$ as  the 
set of the first $L$ prime numbers which do not divide $\#\E(\F_q)$
(clearly we can assume that, say $T \ge  (\log q)^{3}$, 
since otherwise the bound is trivial). We also note that in this case
\begin{equation}
\label{eq:v small 1}
\max_{v\in\cV} v = O(\#\cV \log q). 
\end{equation}

 Then 
we arrive to the following analogue of~\cite[Bound~(4)]{GarKar}:
$$
T_{\rho, \vartheta} (\psi,\cK, \cM) 
\le \frac{(\#\cK)^{1-1/(2\nu)}}{\#\cV} \sum_{t \in \Z_T^*} M_t^{1/(2\nu)}
$$
where
$$
M_t = \sum_{z \in \Z_T} \left| \sum_{v \in \cV} \vartheta(vt)\chi_\cM(vt)
\psi\(x(zvP)\) \right|^{2\nu}
$$
and $\chi_\cM$ is the characteristic function of the set $\cM$.
We only deviate from that 
proof at the point  where the Weil bound is applied to the sums
$$
\sum_{z\in \cH} \exp\(\frac{2 \pi i a}{p}\(\sum_{j=1}^{\nu} z^{tv_j}
- \sum_{j=\nu+1}^{2\nu} z^{tv_j}\)\)
\ll  \max_{1 \le j \le 2\nu } v_j q^{1/2}
$$ 
where $\cH$ is an arbitrary subgroup of 
$\F_q^*$ and  $v_1, \ldots, v_{2 \nu}$ are positive integers 
(such that $(v_{\nu +1}, \ldots, v_{2\nu})$ is not a permutation 
of $(v_1, \ldots, v_{\nu})$). Here,  as in~\cite{BFGS} 
we use instead the following bound from~\cite{LanShp}:
$$
\sum_{\substack{Q \in \cH\\ Q \ne \cO}}
\psi\(\sum_{j=1}^{\nu} x\(v_jQ\) - 
\sum_{j=\nu + 1}^{2\nu} x\( v_jQ\) \)  \ll  \max_{1 \le j \le 2\nu } v_j^2q^{1/2},
$$
where $\cH$ is the  subgroup of  $\E(\Fp)$ 
(in our particular case $\cH =\langle P\rangle$ is
generated by $P$) and $v_1, \ldots, v_{2 \nu}$ 
are the same as in the above, that is, such that 
$(v_{\nu +1}, \ldots, v_{2\nu})$ is not a permutation 
of $( v_1, \ldots,  v_{\nu})$. 

Now since $\#\E(\F_q)=O(q)$, using an argument similar 
to the one given in~\cite{GarKar} 
and recalling~\eqref{eq:v small 1} we obtain
\begin{eqnarray*}
M_t&\ll& \sum_{v_1\in \cV}\ldots\sum_{v_\nu\in \cV}\(\prod_{j=1}^\nu\chi_{\cM}(v_jt)\)T\\
& &\quad + 
\sum_{v_1\in \cV}\ldots\sum_{v_{2\nu}\in \cV}\(\prod_{j=1}^{2\nu}\chi_{\cM}(v_jt)\)q^{1/2}(\#\cV\log q)^2.
\end{eqnarray*}
Therefore
$$
M_t\ll \(\sum_{v\in \cV}\chi_{\cM}(vt)\)^\nu T+ 
\(\sum_{v\in \cV}\chi_{\cM}(vt)\)^{2\nu}q^{1/2}(\#\cV\log q)^2.
$$
This leads to the following
\begin{eqnarray*}
T_{\rho, \vartheta} (\psi,\cK, \cM) 
&\ll&\frac{(\#\cK)^{1-\frac{1}{2\nu}}}{\#\cV} T^\frac{1}{2\nu}\sum_{t \in \Z_T^*}\(\sum_{v\in \cV}\chi_{\cM}(vt)\)^{1/2}\\
& & \quad +~\frac{(\#\cK)^{1-\frac{1}{2\nu}}}{\#\cV}\(\#\cV\log q\)^{1/{\nu}} q^{1/{4\nu}}\sum_{t \in \Z_T^*}\(\sum_{v\in \cV}\chi_{\cM}(vt)\).
\end{eqnarray*}
On the other hand we have
$$
\sum_{t \in \Z_T^*}\(\sum_{v\in \cV}\chi_{\cM}(vt)\)=\#\cM\#\cV,
$$
and by the Cauchy  inequality we get
\begin{eqnarray*}
\sum_{t \in \Z_T^*}\(\sum_{v\in \cV}\chi_{\cM}(vt)\)^{1/2}&\le& (\#Z_T^*)^{1/2} \(\sum_{t \in \Z_T^*}\sum_{v\in \cV}\chi_{\cM}(vt)\)^{1/2}\\&\le& T^{1/2} \(\#\cM\#\cV\)^{1/2}.
\end{eqnarray*}
Thus
\begin{equation}
\label{eq:final}
\begin{split}
T_{\rho, \vartheta} (\psi,\cK, \cM) 
\ll&~\frac{(\#\cK)^{1-\frac{1}{2\nu}}}
{\(\#\cV\)^{1/2}} T^{1/{2\nu+1/2}}\(\#\cM\)^{1/2}\\
  & \quad +~(\#\cK)^{1-\frac{1}{2\nu}}\(\#\cV\log q\)^{1/{\nu}} q^{1/{4\nu}}\#\cM.
\end{split}
\end{equation}
Let 
$$
L =\fl{\frac{T^{\frac{v+1}{v+2}}}{q^{\frac{1}{2(v+2)}}{\(\log q\)}^{\frac{2}{v+2}}{\(\#\cM\)}^{\frac{v}{v+2}}}}.
$$
We note that if $L=0$ then
$$
T^{\frac{v+1}{v+2}}\le q^{\frac{1}{2(v+2)}}{\(\log q\)}^{\frac{2}{v+2}}{\(\#\cM\)}^{\frac{v}{v+2}}\le
q^{\frac{1}{2(v+2)}}{\(\log q\)}^{\frac{2}{v+2}}T^{\frac{v}{v+2}}
$$ 
and thus
$$
T \le q^{1/2} \(\log q\)^2.
$$
It is easy to check that in this case  
\begin{eqnarray*}
\lefteqn{\frac{ 
(\#\cK)^{1-\frac{1}{2\nu}} (\#\cM)^{\frac{\nu+1}{\nu+2}}T^{\frac{\nu+1}{\nu(\nu+2)}}q^{\frac{1}{4(\nu+2)}}(\log q)^{\frac{1}{\nu+2}} } { \#\cK  \#\cM} }\\
&  & \qquad \ge (\#\cK)^{-\frac{1}{2\nu}} (\#\cM)^{-\frac{1}{\nu+2}}T^{\frac{\nu+1}{\nu(\nu+2)}}q^{\frac{1}{4(\nu+2)}}(\log q)^{\frac{1}{\nu+2}}\\
&  & \qquad \ge T^{-\frac{1}{2\nu}} T^{-\frac{1}{\nu+2}}T^{\frac{\nu+1}{\nu(\nu+2)}}q^{\frac{1}{4(\nu+2)}}(\log q)^{\frac{1}{\nu+2}}\\
&  & \qquad = T^{-\frac{1}{2(\nu+2)}}q^{\frac{1}{4(\nu+2)}}(\log q)^{\frac{1}{\nu+2}} \ge 1.
\end{eqnarray*}
thus the result is trivial.

We now assume that $L \ge 1$ and choose $\cV$ to be of cardinality
$\#\cV= L$. Then we have 
$$
\frac{T^{\frac{v+1}{v+2}}}{q^{\frac{1}{2(v+2)}}{\(\log q\)}^{\frac{2}{v+2}}{\(\#\cM\)}^{\frac{v}{v+2}}}\ge
\#\cV\ge  \frac{T^{\frac{v+1}{v+2}}}{2q^{\frac{1}{2(v+2)}}{\(\log q\)}^{\frac{2}{v+2}}{\(\#\cM\)}^{\frac{v}{v+2}}}, 
$$
and $L\le T(\log q)^{-2}$ provided that $q$ is large enough. Now the result follows from~\eqref{eq:final}. 
\end{proof}

\section{Lower Bound for the Sum-Product Problem on Elliptic Curves}

\begin{theorem}
\label{thm:EC Sum Prod L} 
Let 
$\cA$ and  $\cB$ be arbitrary subsets of $\Z_T^*$.  
Then for the sets $\cS$ and $\cT$, given by~\eqref{eq:S and T},  we have
$$
\# \cS \#\cT \gg 
\min\{ q\# \cA,  (\# \cA)^{2} (\#\cB)^{5/3} q^{-1/6}
T^{-4/3}(\log q)^{-2/3}\} .
$$
\end{theorem}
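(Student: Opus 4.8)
The plan is to follow the Garaev argument for sum-product estimates, as was announced in the introduction, using Theorem~\ref{thm:BilinSum} in place of the simpler character-sum bound from~\cite{Shp1}. The central object will be the number of solutions to an equation that ties together the sum set $\cS$ and the product set $\cT$. Specifically, for a fixed nonzero additive character $\psi$ of $\F_q$, I would count representations and use orthogonality of characters to detect the relevant additive relations, expressing the relevant counting quantity as a main term plus a character-sum error term.

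First I would introduce the collision count
$$
N = \#\{(a,b,a',b') \in \cA \times \cB \times \cA \times \cB\ :\ x(aP) + x(bP) = x(a'P) + x(b'P)\ \text{and}\ x(abP) = x(a'b'P)\}.
$$
On one hand, a standard Cauchy--Schwarz / double-counting argument gives a lower bound $N \gg (\#\cA)^2 (\#\cB)^2 / (\#\cS\,\#\cT)$, since each of the two defining equations restricts values into sets of size $\#\cS$ and $\#\cT$ respectively. On the other hand, I would produce an upper bound for $N$ by writing both constraints through additive characters: the product constraint $x(abP)=x(a'b'P)$ is detected by summing $\psi$ over $\F_q$, and after changing variables the sum $x(aP)+x(bP)$ feeds into exactly the bilinear structure controlled by $T_{\rho,\vartheta}(\psi,\cK,\cM)$ from~\eqref{eq:Sum T}. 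The diagonal contribution produces the main term of order $q^{-1}(\#\cA)^2(\#\cB)^2 + \ldots$, and the off-diagonal contribution is bounded by invoking Theorem~\ref{thm:BilinSum} with a suitable choice of $\nu$ (here $\nu$ tuned so that the exponents $5/3$, $1/6$, $4/3$, $2/3$ emerge — concretely $\nu = 3$ should produce the stated shape).

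Comparing the two estimates for $N$ then yields
$$
\frac{(\#\cA)^2 (\#\cB)^2}{\#\cS\,\#\cT} \ll \frac{(\#\cA)^2(\#\cB)^2}{q} + (\text{character-sum error}),
$$
and rearranging gives a lower bound for $\#\cS\,\#\cT$. The $\min$ in the statement reflects which of the two terms on the right dominates: when the main (diagonal) term wins, one recovers $\#\cS\,\#\cT \gg q\,\#\cA$; when the bilinear error term wins, the second branch $(\#\cA)^2(\#\cB)^{5/3}q^{-1/6}T^{-4/3}(\log q)^{-2/3}$ appears. Throughout I would need the hypothesis $\cA,\cB \subseteq \Z_T^*$ so that multiplication by $a$ or $b$ is a bijection of the cyclic group $\langle P\rangle$, which is exactly what legitimizes the change of variables $k = a$, $m = b$ (or similar) turning the count into the bilinear sum.

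The main obstacle I anticipate is the bookkeeping in passing from the collision count $N$ to the bilinear sum in a way that cleanly separates the diagonal main term from the off-diagonal part while keeping the weights $\rho,\vartheta$ bounded by $1$. One must carefully arrange the two simultaneous constraints so that detecting the product equation via $\psi$ leaves the sum equation in precisely the form $\sum_{k}\bigl|\sum_m \rho(k)\vartheta(m)\psi(x(kmP))\bigr|$ after applying Cauchy--Schwarz to decouple one of the variables; a naive expansion would entangle the additive and multiplicative data and obstruct the application of Theorem~\ref{thm:BilinSum}. Getting the exponent arithmetic to land on exactly $5/3$, $-1/6$, $-4/3$, $-2/3$ is then a matter of substituting the Theorem~\ref{thm:BilinSum} bound with $\nu=3$ and simplifying, which is routine once the structural reduction is in place.
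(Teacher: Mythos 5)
Your overall strategy (detect constraints with additive characters and feed the resulting bilinear sum into Theorem~\ref{thm:BilinSum}) is in the right spirit, but the specific reduction you propose --- bounding the collision count $N$ of quadruples $(a,b,a',b')$ with equal sums and equal products --- is not Garaev's device and cannot deliver the stated bound. The obstruction is visible before any character sum is computed: the diagonal quadruples $(a,b,a,b)$ give $N \ge \#\cA\,\#\cB$, so the chain $\#\cS\,\#\cT \ge (\#\cA\,\#\cB)^2/N$ can never prove anything stronger than $\#\cS\,\#\cT \gg \#\cA\,\#\cB$. The first branch of the theorem asserts $\#\cS\,\#\cT \gg q\,\#\cA$, and since $\#\cB \le T \le q+O(q^{1/2})$, whenever $\#\cB = o(q)$ the target $q\,\#\cA$ exceeds $\#\cA\,\#\cB$ and is out of reach of any energy estimate. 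The second branch also comes out short: detecting two simultaneous equations forces a double character average, and after removing the diagonal the best one can extract from Theorem~\ref{thm:BilinSum} with $\cK=\cA$, $\cM=\cB$ is roughly $N \ll \mathrm{main} + \#\cA(\#\cB)^{4/3}T^{4/3}q^{1/6}(\log q)^{2/3}$, which yields only $\#\cS\,\#\cT \gg \#\cA(\#\cB)^{2/3}T^{-4/3}q^{-1/6}(\log q)^{-2/3}$ --- weaker than the claimed $(\#\cA)^2(\#\cB)^{5/3}q^{-1/6}T^{-4/3}(\log q)^{-2/3}$ by a factor of $\#\cA\,\#\cB$. Finally, your guess $\nu=3$ is wrong: the factor $T^{-4/3}q^{-1/6}(\log q)^{-2/3}$ is exactly the reciprocal of the square of $T^{2/3}q^{1/12}(\log q)^{1/3}$, i.e.\ Theorem~\ref{thm:BilinSum} is used with $\nu=1$.

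What the paper actually does is avoid the two-equation count altogether. Set $\cH=\{ab\ :\ a\in\cA,\ b\in\cB\}$ and let $J$ be the number of solutions of the \emph{single} equation $x(hb_1^{-1}P)+x(b_2P)=u$ with $b_1,b_2\in\cB$, $h\in\cH$, $u\in\cS$. Here $\cH$ and $\cS$ enter as ranges of free summation variables, so one application of orthogonality produces the main term $(\#\cB)^2\#\cS\,\#\cH/q$, which already contains the product $\#\cS\,\#\cH$; the explicit solutions $(b_1,b_2,ab_1,x(aP)+x(b_2P))$ give the lower bound $J\ge\#\cA(\#\cB)^2$. Theorem~\ref{thm:BilinSum} is then applied with $\cK=\cH$ and $\cM=\{b^{-1}\ :\ b\in\cB\}$ --- note that it is the product set, not $\cA$, that sits inside the bilinear sum --- followed by one Cauchy--Schwarz over the remaining two character sums. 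Comparing the upper and lower bounds for $J$ gives the dichotomy, hence the minimum in the statement, with $\#\cT\ge\frac12\#\cH$ at the end.
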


\begin{proof} Let 
$$
\cH = \{ab\ : \ a \in \cA, \ b \in \cB\}.
$$
Following the idea of  M.~Garaev~\cite{Gar2}, 
we now denote by $J$ the number of solutions 
$(b_1, b_2 ,h,u)$ to  the
equation 
\begin{equation}
\label{eq:Eqn}
x(h b_1^{-1}P)  + x(b_2P) = u, \qquad  b_1,b_2 \in \cB, \ h \in
\cH,\ u \in \cS.
\end{equation} 
Since obviously the vectors
$$
(b_1, b_2 ,h,u) = \(b_1,b_2,a b_1, x(aP)+x(b_2P)\), \qquad a \in
\cA, \ b_1,b_2 \in \cB,
$$
are all pairwise distinct solutions to~\eqref{eq:Eqn}, we obtain
\begin{equation}
\label{eq:J Lower}
J \ge \# \cA (\# \cB)^2.
\end{equation} 

To obtain an upper bound on $J$ we use $\varPsi$ to denote the 
set of all $q$ additive characters of $\F_q$ and write $\varPsi^*$
for the set of nontrivial characters. 
Using the identity
\begin{equation}
\label{eq:Ident}
\frac{1}{q}\sum_{\psi \in \varPsi} \psi(z) =
\left\{\begin{array}{ll}
0,&\quad\text{if $z \in \F_q^*$,}\\
1,&\quad\text{if $z =0$,}
\end{array}
\right.
\end{equation}
we obtain 
\begin{eqnarray*}
J &=& \sum_{b_1\in \cB}\sum_{b_2\in \cB}
\sum_{h \in \cH} \sum_{u \in \cS} \frac{1}{q}\sum_{\psi \in \varPsi} \psi\(x(h b_1^{-1}P)  - x(b_2P) - u\)\\ &=& \frac{1}{q}\sum_{\psi \in \varPsi} \sum_{b_1\in \cB}
\sum_{h \in \cH}  \psi\(x(h b_1^{-1}P)\)
 \sum_{b_2\in \cB}\psi\(x(b_2P)\)\sum_{u \in \cS} 
\psi\( - u\)\\
&=&\frac{(\# \cB)^2\#\cS \#\cH}{q} \\
& &\qquad +~\frac{1}{q}\sum_{\psi \in \varPsi^*} \sum_{b_1\in \cB}
\sum_{h \in \cH}  \psi\(x(h b_1^{-1}P)\)
 \sum_{b_2\in \cB}\psi\(x(b_2P)\)\sum_{u \in \cS} 
\psi\( - u\).
\end{eqnarray*} 

Applying Theorem~\ref{thm:BilinSum} 
with $\rho(k) = \vartheta(m) = 1$,  $\cK = \cH$
and $\cM= \{b^{-1} ~:~ b \in \cB\}$ and also taking $\nu = 1$,   we obtain 
$$
\left| \sum_{b_1\in \cB}
\sum_{h \in \cH}  \psi\(x(h b_1^{-1}P)\)\right|
 \ll \Delta
$$
where
$$\Delta=(\#\cH)^{1/2} (\#\cB)^{2/3}T^{2/3}q^{1/12}(\log q)^{1/3}.
$$ 
Therefore, 
\begin{equation}
\label{eq:J prelim}
J\ll \frac{(\# \cB)^2\#\cS \#\cH}{q}
  +
\frac{1}{q} \Delta \sum_{\psi \in \varPsi^*}  \left|
 \sum_{b\in \cB}\psi\(x(bP)\)\right|\left|\sum_{u \in \cS} 
\psi\(-u\)\right|.
\end{equation}
Extending the summation over $\psi$ to the full set $\varPsi$ and using
the Cauchy inequality, we obtain 
\begin{equation}
\label{eq:Cauchy}
\begin{split}
\sum_{\psi \in \varPsi*}  \left|
 \sum_{b\in \cB}\psi\(x(bP)\)\right|&\left|\sum_{u \in \cS} 
\psi\(u\)\right| \\
\le &\sqrt{\sum_{\psi \in \varPsi}  \left|
 \sum_{b\in \cB}\psi\(x(bP)\)\right|^2}
\sqrt{\sum_{\psi \in \varPsi}  \left|\sum_{u \in \cS} 
\psi\(u\)\right|^2}.
\end{split} 
\end{equation}
Recalling the orthogonality property~\eqref{eq:Ident},
we derive
$$
\sum_{\psi \in \varPsi}  \left|
 \sum_{b\in \cB}\psi\(x(bP)\)\right|^2 = q\#\{(b_1, b_2) \in \cB^2\ :  \
 b_1 \equiv \pm  b_2\pmod T\} \ll  q \#\cB. 
$$
Notice that $b_1\equiv -b_2\pmod T$ has been included since $x(P)=x(-P)$ 
for $P \in \E(\F_q)$. 
 
Similarly,
$$
\sum_{\psi \in \varPsi}  \left|\sum_{u \in \cS} 
\psi\(u\)\right|^2 \le q \#\cS. $$
Substituting these bounds in~\eqref{eq:Cauchy} we obtain 
$$
\sum_{\psi \in \varPsi*}  \left|
 \sum_{b\in \cB}\psi\(x(bP)\)\right|\left|\sum_{u \in \cS}  
\psi\(u\)\right| \ll q\sqrt{\# \cB\#\cS},
$$
which after inserting in~\eqref{eq:J prelim}, yields
\begin{equation}
\label{eq:J Upper} 
J \ll \frac{(\# \cB)^2\#\cS \#\cH}{q}+ \Delta (\#\cS)^{1/2}(\#\cB)^{1/2} . 
\end{equation}
Thus, comparing~\eqref{eq:J Lower}
and~\eqref{eq:J Upper}, we derive
$$
 \frac{(\# \cB)^2\#\cS \#\cH}{q}+ \Delta (\#\cS)^{1/2}(\#\cB)^{1/2}\gg \# \cA (\# \cB)^2.
$$
Thus either 
\begin{equation}
\label{eq: case 1} 
\frac{(\# \cB)^2\#\cS \#\cH}{q} \gg \# \cA (\# \cB)^2,
\end{equation}
or 
\begin{equation}
\label{eq: case 2} 
 \Delta (\#\cS)^{1/2}(\#\cB)^{1/2}\gg \# \cA (\# \cB)^2.
\end{equation}
If~\eqref{eq: case 1} holds, then we have
$$
\#\cS \#\cH \gg q \# \cA. 
$$
If~\eqref{eq: case 2} holds, then
recalling the definition of $\Delta$, we derive
$$
(\#\cS)^{1/2}(\#\cH)^{1/2} (\#\cB)^{5/3}T^{2/3}q^{1/12}(\log q)^{1/3}
\gg\# \cA (\# \cB)^2. 
$$ 
It only remains to notice that $\#\cT \ge 0.5\#\cH$ 
to conclude the proof.
\end{proof}

We now consider several special cases. 

\begin{cor}  
\label{cor:1}
For any fixed $\varepsilon>0$ there exists $\delta > 0$ such that 
if $\cA, \cB \subseteq \Z_T^*$ are  arbitrary  subsets 
with 
$$
q^{1-\varepsilon} \ge \# \cA \ge \# \cB \ge T^{4/5+\varepsilon} q^{1/10},
$$
then for the sets $\cS$ and $\cT$, given by~\eqref{eq:S and T},  we have
$$
\# \cS \#\cT \gg \(\# \cA \)^{2 + \delta}. 
$$
\end{cor}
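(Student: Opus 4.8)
The plan is to apply Theorem~\ref{thm:EC Sum Prod L} and to show that, under the stated hypotheses, \emph{each} of the two quantities inside the minimum is already bounded below by $(\#\cA)^{2+\delta}$ for a suitable $\delta = \delta(\varepsilon) > 0$; the corollary then follows at once since in that case $\min\{\cdot,\cdot\} \gg (\#\cA)^{2+\delta}$. A convenient choice will turn out to be $\delta = \varepsilon$ (any slightly smaller value works just as well), and the verification splits into two independent estimates.

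For the first term $q\#\cA$ I would use only the upper bound $\#\cA \le q^{1-\varepsilon}$. Indeed, $q\#\cA \ge (\#\cA)^{2+\delta}$ is equivalent to $q \ge (\#\cA)^{1+\delta}$, and since $(\#\cA)^{1+\delta} \le q^{(1-\varepsilon)(1+\delta)}$, it suffices that $(1-\varepsilon)(1+\delta)\le 1$, that is $\delta \le \varepsilon/(1-\varepsilon)$. The choice $\delta=\varepsilon$ satisfies this, so this term presents no difficulty and the bound holds for every admissible $q$.

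The second term is where the real work lies. Here I would feed in the lower bound $\#\cB \ge T^{4/5+\varepsilon}q^{1/10}$, which after raising to the power $5/3$ gives $(\#\cB)^{5/3}\ge T^{4/3+5\varepsilon/3}q^{1/6}$; multiplying by $q^{-1/6}T^{-4/3}$ collapses the powers of $q$ and $T$ and leaves the clean factor $T^{5\varepsilon/3}$. Thus the second term is at least $(\#\cA)^2 T^{5\varepsilon/3}(\log q)^{-2/3}$, and it remains to absorb the logarithm, i.e. to show $T^{5\varepsilon/3}(\log q)^{-2/3}\ge (\#\cA)^{\delta}$. Since $\cA\subseteq\Z_T^*$ forces $\#\cA<T$, it is enough to establish $T^{5\varepsilon/3-\delta}\ge(\log q)^{2/3}$, and with $\delta=\varepsilon$ this reads $T^{2\varepsilon/3}\ge(\log q)^{2/3}$.

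The one point requiring care --- and the only genuine obstacle --- is to see that $T$ is large enough to beat the logarithmic factor. I expect to extract this from the hypotheses themselves: combining $\#\cB\le\varphi(T)<T$ with $\#\cB\ge T^{4/5+\varepsilon}q^{1/10}$ yields $T^{1/5-\varepsilon}\ge q^{1/10}$, hence $T\ge q^{1/(2-10\varepsilon)}$, which is at least $q^{1/2}$ once $\varepsilon$ is small (for larger $\varepsilon$ the admissible range for $\#\cB$ is empty and the statement is vacuous). Consequently $T^{2\varepsilon/3}\ge q^{\varepsilon/3}$ grows polynomially in $q$, while $(\log q)^{2/3}$ grows only logarithmically, so $T^{2\varepsilon/3}\ge(\log q)^{2/3}$ for all $q$ exceeding a bound depending on $\varepsilon$, the finitely many remaining bounded values of $q$ being absorbed into the implied constant. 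Putting the two estimates together shows that the minimum in Theorem~\ref{thm:EC Sum Prod L} is $\gg (\#\cA)^{2+\delta}$ with $\delta=\varepsilon$, which completes the proof.
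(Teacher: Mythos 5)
Your proposal is correct and is exactly the intended derivation: the paper states Corollary~\ref{cor:1} without proof as a direct consequence of Theorem~\ref{thm:EC Sum Prod L}, and your substitution of the hypotheses into each term of the minimum (handling the logarithm via $T\ge q^{1/(2-10\varepsilon)}$) is the natural way to carry this out. The choice $\delta=\varepsilon$ and the verification of both branches check out.
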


In particular, if $T\ge q^{1/2+\varepsilon}$ 
then there is always some  nontrivial range of cardinalities
$\# \cA$ and $\# \cB$
in which Corollary~\ref{cor:1} applies. 

\begin{cor}  
\label{cor:2}
for arbitrary  subsets of $\cA, \cB \subseteq \Z_T^*$  
with 
$$
 \# \cA = \# \cB \ge T^{1/2} q^{7/16} (\log q)^{1/4}
$$
and for the sets $\cS$ and $\cT$, given by~\eqref{eq:S and T},  we have
$$
\# \cS \#\cT \gg    q \# \cA .
$$
\end{cor}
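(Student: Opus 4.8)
The plan is to read off the result directly from Theorem~\ref{thm:EC Sum Prod L}. That theorem gives
$$
\# \cS \#\cT \gg \min\{ q\# \cA,\ (\# \cA)^{2} (\#\cB)^{5/3} q^{-1/6} T^{-4/3}(\log q)^{-2/3}\},
$$
so to obtain the asserted bound $\#\cS\#\cT \gg q\#\cA$ it suffices to show that under the stated hypothesis the minimum is comparable to its first term. In other words, I need only verify that the second term dominates, that is,
$$
(\# \cA)^{2} (\#\cB)^{5/3} q^{-1/6} T^{-4/3}(\log q)^{-2/3} \gg q\# \cA.
$$
Indeed, if the second term is $\gg q\#\cA$, then $\min\{q\#\cA,\ \cdot\} \gg q\#\cA$, and the desired conclusion follows.

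Next I would set $N = \#\cA = \#\cB$ and simplify. Since $(\#\cA)^2(\#\cB)^{5/3} = N^{11/3}$, the displayed inequality becomes $N^{11/3} q^{-1/6} T^{-4/3}(\log q)^{-2/3} \gg q N$, and after cancelling one factor of $N$ this is equivalent to $N^{8/3} \gg q^{7/6} T^{4/3}(\log q)^{2/3}$. Raising both sides to the power $3/8$ turns this into $N \gg q^{7/16} T^{1/2}(\log q)^{1/4}$, which is precisely the lower bound on $\#\cA = \#\cB$ assumed in the corollary. Thus the threshold in the statement is exactly the crossover point at which the second term of the minimum in Theorem~\ref{thm:EC Sum Prod L} overtakes $q\#\cA$, and under that hypothesis the minimum is $\gg q\#\cA$, which completes the argument.

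There is essentially no genuine obstacle here: the corollary is a direct specialization of Theorem~\ref{thm:EC Sum Prod L} to the balanced case $\#\cA = \#\cB$, and the only work is the exponent bookkeeping needed to confirm that the powers $q^{7/16}$, $T^{1/2}$, and $(\log q)^{1/4}$ appearing in the hypothesis are exactly those produced by taking the $3/8$-th root in the final step. The one point to keep in mind is that the implied constants in the $\gg$-notation are harmless, since we only need the inequalities up to a multiplicative constant; consequently no care beyond tracking exponents is required.
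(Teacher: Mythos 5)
Your derivation is correct and is exactly the intended (implicit) proof in the paper: the hypothesis $\#\cA=\#\cB\ge T^{1/2}q^{7/16}(\log q)^{1/4}$ is precisely the condition under which the second term of the minimum in Theorem~\ref{thm:EC Sum Prod L} is at least $q\#\cA$, and your exponent bookkeeping ($N^{8/3}\ge q^{7/6}T^{4/3}(\log q)^{2/3}$, then take $3/8$-th roots) checks out.
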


\section{Upper Bound for the Sum-Product Problem on Elliptic Curves}

We now show that in some cases the sets $\cS$ and $\cT$
are not very big. 

As usual, we use $\varphi(T) = \# \Z_T^*$ to 
denote the Euler function.

\begin{theorem}
\label{thm:EC Sum Prod U} 
Let $q=p$ be prime and let $T \ge p^{3/4+\varepsilon}$.
Then there are sets 
$\cA = \cB \subset \Z_T^*$ of cardinality
$$
\# \cA = \# \cB =(1 + o(1)) \frac{\varphi(T)^2}{2p}
$$
such  that  for the sets $\cS$ and $\cT$, 
given by~\eqref{eq:S and T},  we have
$$
\max\{\# \cS, \#\cT \} \le (\sqrt{2}+o(1)) \sqrt{p\# \cA} 
$$
as $p\to \infty$. 
\end{theorem}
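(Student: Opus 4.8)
The plan is to produce an explicit, highly structured pair $\cA = \cB$. Since $q = p$ is prime we identify $\F_p$ with $\{0,1,\dots,p-1\}$ and may speak of intervals; the key idea is to force every $x$-coordinate $x(aP)$, $a \in \cA$, into a single short interval $I$, so that the sumset $\cS$ is trapped in $I+I$, while the product side $\cT$ is controlled by the a priori bound on the number of distinct $x$-coordinates of units. Concretely I would fix an interval $I \subseteq \F_p$ with $\# I = \ell := \lceil \varphi(T)/2 \rceil$ and set
$$
\cA = \cB = \{a \in \Z_T^*\ :\ x(aP) \in I\}.
$$

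With this choice the two sets are immediately small. Indeed $\cS \subseteq I + I$, a set of at most $2\ell - 1$ residues, so $\#\cS \le 2\ell - 1 = (1+o(1))\varphi(T)$ (and $\#\cS \le p$ trivially should $I+I$ wrap around). For $\cT$, every element is $x(cP)$ with $c = ab \in \Z_T^*$; since $x(cP) = x(-cP)$ and $\pm c$ are both units, there are at most $\varphi(T)/2$ distinct such values, whence $\#\cT \le \varphi(T)/2$. Thus $\max\{\#\cS, \#\cT\} \le (1+o(1))\varphi(T)$ for free, and the whole theorem reduces to the single assertion that $\#\cA = (1+o(1))\varphi(T)^2/(2p)$, for then $(\sqrt{2}+o(1))\sqrt{p\,\#\cA} = (1+o(1))\varphi(T)$.

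To count $\#\cA$ I would detect the unit condition by M\"obius inversion, $\mathbf{1}_{\gcd(a,T)=1} = \sum_{d \mid \gcd(a,T)} \mu(d)$, and the interval condition by completing with the additive characters of $\F_p$. Writing $a = da'$, the inner count is $\#\{Q \in \langle dP \rangle\ :\ x(Q) \in I\}$, where $\langle dP \rangle$ has order $T/d$. Completion produces a main term $(T/d)\ell/p$ and an error governed by the character sums $\sum_{Q \in \langle dP\rangle} \psi(x(Q))$ over nontrivial $\psi$. Here I would invoke the bound of~\cite{LanShp} in its simplest form (a single linear form), giving $\bigl|\sum_{Q \in \cH} \psi(x(Q))\bigr| \ll q^{1/2}$ for any subgroup $\cH$; together with the standard estimate $\sum_{\psi \in \varPsi^*} \bigl|\sum_{\xi \in I} \psi(\xi)\bigr| \ll p \log p$ this gives
$$
\#\{Q \in \langle dP\rangle\ :\ x(Q) \in I\} = \frac{(T/d)\ell}{p} + O(q^{1/2}\log q).
$$
Summing against $\mu(d)$ over $d \mid T$ and using $\sum_{d \mid T} \mu(d)(T/d) = \varphi(T)$ then yields
$$
\#\cA = \frac{\varphi(T)\ell}{p} + O\bigl(\tau(T)\,q^{1/2}\log q\bigr) = \frac{\varphi(T)^2}{2p} + O\bigl(q^{1/2+o(1)}\bigr).
$$

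It remains to check that the error is negligible, and this is precisely where $T \ge p^{3/4+\varepsilon}$ is used: since $\varphi(T) = T^{1+o(1)}$, the main term is $\gg p^{1/2 + 2\varepsilon - o(1)}$, which dominates $q^{1/2+o(1)} = p^{1/2+o(1)}$, so $\#\cA = (1+o(1))\varphi(T)^2/(2p)$ as required. The hard part will be this equidistribution estimate: one must marry square-root cancellation in the $x$-coordinate sums with the divisor bookkeeping tightly enough that the error stays below the main term, and it is exactly this balance that fixes the exponent at $3/4$. The remaining wrap-around subtleties, which occur only when $\varphi(T)$ is close to $p$, are harmless because the target bound is then itself of order $p$ while $\#\cS \le p$ always.
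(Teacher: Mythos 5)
Your proposal is correct and follows essentially the same route as the paper: the same construction $\cA=\cB=\{a\in\Z_T^*: x(aP)\in I\}$ with $\#I=\varphi(T)/2$, the same M\"obius/inclusion--exclusion reduction to exponential sums over the subgroups $\langle dP\rangle$ bounded by $O(q^{1/2})$, and the same use of $T\ge p^{3/4+\varepsilon}$ to make the main term $\varphi(T)^2/(2p)$ dominate the $p^{1/2+o(1)}$ error. The only cosmetic differences are that the paper quotes the Erd{\H o}s--Tur\'an inequality where you complete the interval sum by hand, and cites~\cite{KohShp} rather than~\cite{LanShp} for the single-character subgroup bound.
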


\begin{proof}  We recall the bound from~\cite{KohShp} of exponential sums over 
subgroups of the group of points on elliptic curves which in particular 
implies that for any subgroup $\cG$ of $\E(\F_p)$ the bound
\begin{equation}
\label{eq:Subgroup}
\sum_{G \in \cG}   \exp\(2 \pi i \lambda x(G)/p\) \ll p^{1/2},
\end{equation}
holds uniformly over all integer $\lambda$ with $\gcd(\lambda, p) =1$. 

Let $\mu(d)$ be the   M\"obius function, that is,  $\mu(1)
= 1$, $\mu(m) = 0$ if $m \ge 2$ is not square-free  and $\mu(m) = (-1)^{\omega(m)}$
otherwise, where $\omega(d)$ is the number of distinct prime divisors of $d\ge 2$,
see~\cite[Section~16.2]{HardyWright}. 
 
Using the inclusion-exclusion principle, we obtain
\begin{eqnarray*}
\sum_{\substack{a=1\\gcd(a,T) =1}}^T  \exp\(2 \pi i \lambda x(aP)/p\) 
& = & 
\sum_{d|T} \mu(d) \sum_{\substack{a=1\\d \mid a}}^T  
\exp\(2 \pi i \lambda x(aP)/p\)\\ 
& = & 
\sum_{d|T} \mu(d) \sum_{b=1}^{T/d}  
\exp\(2 \pi i \lambda x(bP)/p\).
\end{eqnarray*}

Using~\eqref{eq:Subgroup} and recalling that
$$
\sum_{d|T} 1  = T^{o(1)}
$$
see~\cite[Theorem~317]{HardyWright}, we derive
$$
\sum_{\substack{a=1\\gcd(a,T) =1}}^T 
 \exp\(2 \pi i \lambda x(aP)/p\) \ll p^{1/2 + o(1)}.
$$
Combining this with the  Erd{\H o}s-Tur\'{a}n inequality, 
see~\cite[Theorem~1.21]{DrTi},
 we see that for any positive integer $H$,
there are $H\varphi(T) /p  +  O\(p^{1/2+ o(1)}\)$ 
elements   $a \in \Z_T^*$ with  $x(aP) \in [0, H-1]$.
Let $\cA = \cB$ be the set of these elements $a$.
For the sets $\cS$ and $\cT$, we obviously have
$$
\# \cS \le 2H \mand \#\cT \le \varphi(T).
$$
We now choose $H = \varphi(T)/2$. Since $T \ge p^{3/4+\varepsilon}$
and  also since
$$
\varphi(T) \gg \frac{T}{\log \log T},
$$
see~\cite[Theorem~328]{HardyWright}, 
we have 
$$
\# \cA = \# \cB =\frac{\varphi(T)^2}{2p}  +  O\(p^{1/2+o(1)}\) =(1 + o(1)) \frac{\varphi(T)^2}{2p}
$$
as $p\to \infty$.
Therefore 
$$
\max\{\# \cS , \#\cT\} \le (\sqrt{2} + o(1)) \sqrt{p\#\cA}
$$
which concludes the proof. 
\end{proof}

We remark that if $T \ge p^{23/24+\varepsilon}$
, then the cardinality of the sets $\cA$ and $\cB$ 
of Theorem~\ref{thm:EC Sum Prod U}
is  
$$
\# \cA = \# \cB = T^{2 + o(1)} p^{-1} \ge 
T^{1/2} p^{7/16} (\log p)^{1/4}
$$
 and thus Corollary~\ref{cor:2} applies as well and we
have 
$$  (\sqrt{2} + o(1)) \sqrt{p\#\cA}
\ge \max\{\# \cS , \#\cT\} \ge \sqrt{\# \cS  \#\cT} \gg 
\sqrt{p\#\cA}
$$
showing that both  Corollary~\ref{cor:2} and
Theorem~\ref{thm:EC Sum Prod U} are tight in 
this range.

\section{Comments}

We remark that using Theorem~\ref{thm:BilinSum}
with other values of $\nu$ in the scheme of the 
proof of Theorem~\ref{thm:EC Sum Prod L} one can obtain 
a series of other statements. However they cannot be 
formulated as a lower bound on the product $\# \cS \#\cT$.
Rather they only give a lower bound on $\max\{\# \cS, \#\cT \}$
which however may in some cases be more precise than those which 
follow from Theorem~\ref{thm:EC Sum Prod L}. 

Certainly extending the range in which the upper and 
lower bounds on $\# \cS $ and $\#\cT$ coincide is also a very 
important question.

\end{document}